\newtheorem{theorem}{Theorem}[section]
\newtheorem{corollary}[theorem]{Corollary}
\newtheorem{lemma}[theorem]{Lemma}
\newtheorem{example}[theorem]{Example}
\newtheorem{proposition}[theorem]{Proposition}
\theoremstyle{definition}
\theoremstyle{remark}
\newtheorem{rem}[theorem]{Remark}
\theoremstyle{remark}
\newcommand{\beql}[1]{\begin{equation}\label{#1}}
\newcommand{\eeq}{\end{equation}}
\begin{document}

\title[All complex Equiangular Tight Frames in dimension 3]{All complex Equiangular Tight Frames in dimension 3}

\author{Ferenc Sz\"oll\H{o}si}

\date{\today.}

\address{F. Sz.: Research Center for Pure and Applied Mathematics, Graduate School of Information Sciences,
Tohoku University, Sendai 980-8579, Japan}\email{szoferi@gmail.com}



\begin{abstract}
In this paper we describe some new algebraic features of the Gram matrices of complex Equiangular Tight Frames (ETF). This lead on the one hand to the nonexistence of several low dimensional complex ETFs; and on the other hand to the full algebraic classification of all complex ETFs in $\mathbb{C}^3$. We use computer aided methods, in particular, Gr\"obner basis computations to obtain these results.
\end{abstract}

\maketitle

{\bf 2000 Mathematics Subject Classification.} Primary 05B20, secondary 46L10.
	
{\bf Keywords and phrases.} {\it Equiangular lines, Tight frames, Gr\"obner basis}

\section{Introduction}
A finite complex equiangular tight frame (or equiangular Parseval frame; for short, we will use ETF) is a collection of $n$ complex unit vectors $\varphi_1,\varphi_2,\hdots, \varphi_{n}$ in $\mathbb{C}^m$ having mutual inner product (or ``angle'' between distinct vectors $\varphi_i$ and $\varphi_j$) as small as possible in absolute value. In particular, for complex ETFs the Welch bound is attained \cite{JT} and one has
\beql{al}\alpha_{n,m}:=|\left\langle\varphi_i,\varphi_j\right\rangle|=\sqrt{\frac{n-m}{m(n-1)}}\qquad\text{for $1\leq i<j\leq n$}.\eeq
We refer to such objects as $(n,m)$-frames. The case $\alpha_{n,m}=0$ leads to the concept of orthonormal basis and as such is uninteresting in general. Therefore we may assume that $\alpha_{n,m}>0$. Equiangular tight frames arise in many industrial applications, most notably they are used for error correction, see e.g.\ \cite{HP}. It is known that $n\leq m^2$ and in it is conjectured that $(m^2,m)$-frames indeed exist for all $m\geq 1$. This is a notorious open problem in quantum tomography and was attacked by Appleby and Grassl with coauthors recently \cite{AP}, \cite{SG}. The analogous problem in the real case is a question on the existence of certain strongly regular graphs, and goes back to Seidel's seminal paper \cite{S}.

The outline of this paper is as follows. In Section 2 we analyze the Gram matrix of equiangular tight frames and provide some new insight into their structure. We use this to formulate several necessary conditions on the existence of complex equiangular tight frames. In Section 3 we describe these properties by systems of polynomial equations and outline a computer-aided approach towards deciding the existence of complex equiangular tight frames. In Section 4 we illustrate our methods and give a complete algebraic characterization of all complex equiangular tight frames in $\mathbb{C}^3$.

The original contribution of this paper is the proposed novel method, which leads to a mathematically rigorous proof of the nonexistence of complex equiangular $(8,3)$-frames. This is the first nonexistence result of this kind. Another contribution is the complete algebraic classification of all complex equiangular $(9,3)$-frames. This is in turn equivalent to the classification of any of the following objects: tight complex projective $2$-designs with angle set $\{1/4\}$ and $9$ elements \cite{KR}, \cite{Z}; SIC-POVMs of order $3$ \cite{SG}, \cite{Z}; or self-adjoint complex Hadamard matrices of order $9$ with constant diagonal \cite{BN}, \cite{CG}, \cite{FSZframehad}. Our results are similar in spirit to the known classification of Mutually Unbiased Bases in $\mathbb{C}^d$ for $d\leq 5$, see \cite{BBB}.
\section{Gram matrices}
We may identify an equiangular $(n,m)$-frame with an $m\times n$ matrix $F$ whose column vectors are the vectors $\varphi_i$, $1\leq i\leq n$. Since $F$ comes from a tight frame, it follows that all of its rows are pairwise orthogonal and have equal norm \cite{HP}, and therefore the Gram matrix of the frame $G:=F^\ast F$ is self-adjoint, has unit diagonal and all of its off diagonal entries have modulus $\alpha_{n,m}$. Additionally, $G$ satisfies the frame condition
\beql{mg}mG^2=nG,\eeq
and hence, up to a trivial scaling, it is a self-adjoint projection of rank $m$. Note that if $G$ is the Gram matrix of an ETF, then so is $PDGD^\ast P^T$ for any permutation matrices $P$ and for any unitary diagonal matrices $D$. Matrices related in this fashion are called equivalent. This equivalence captures the obvious symmetries of ETFs: rearrangement of the frame vectors and scaling them by an arbitrary complex unimodular phase does not change the fundamental properties of the frame. Consequently one can suppose that apart from the $(1,1)$ position the first row and column of $G$ is $\alpha_{n,m}$.

The following folklore result gives a characterization of the $(n-1)\times(n-1)$ sub-Gram matrices of equiangular tight frames.

\begin{lemma}
Let $n\geq m$ be fixed integers, and let $H$ be a self-adjoint matrix of order $n-1$ with constant diagonal $1$ and off diagonal entries of modulus $\alpha_{n,m}$. Then
\beql{Grep}G=\left[\begin{array}{cc}
H & v\\
v^\ast & 1
\end{array}\right]\eeq
is the Gram matrix of a complex equiangular $(n,m)$-frame, if and only if the entries of $v$ are of modulus $\alpha_{n,m}$, and
\beql{2}nH-mH^2=mvv^\ast.\eeq
\end{lemma}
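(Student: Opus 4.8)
The plan is to reduce both directions to the intrinsic characterization of ETF Gram matrices recalled just above the statement: a matrix $G$ of order $n$ is the Gram matrix of a complex equiangular $(n,m)$-frame if and only if it is self-adjoint, has unit diagonal, has all off-diagonal entries of modulus $\alpha_{n,m}$, and satisfies the frame condition \eqref{mg}. The remaining requirements are then automatic: \eqref{mg} together with self-adjointness forces the spectrum of $G$ into $\{0,n/m\}$, so $G\succeq 0$; the unit diagonal gives $\mathrm{tr}\,G=n$, whence $G$ has rank $m$; and any matrix with these four properties factors as $F^\ast F$ for an $m\times n$ matrix $F$ whose columns form such a frame. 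Everything then comes down to expanding \eqref{mg} for the block matrix \eqref{Grep}.

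For the forward direction, suppose the matrix $G$ in \eqref{Grep} is an ETF Gram matrix. The entries of $v$ sit off the diagonal of $G$, hence have modulus $\alpha_{n,m}$, giving the first conclusion. Writing out
\[
G^2=\left[\begin{array}{cc} H^2+vv^\ast & Hv+v\\ v^\ast H+v^\ast & v^\ast v+1\end{array}\right],
\]
the frame condition \eqref{mg} read off the top-left block is exactly $mH^2+mvv^\ast=nH$, that is, \eqref{2}.

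For the converse I would use the same block expansion, which shows that $mG^2=nG$ is equivalent to three conditions. The top-left block is the hypothesis \eqref{2}; the bottom-right block is $mv^\ast v=n-m$, which holds automatically since every entry of $v$ has modulus $\alpha_{n,m}$, so $v^\ast v=(n-1)\alpha_{n,m}^2=(n-m)/m$ by \eqref{al}; and the off-diagonal block demands $mHv=(n-m)v$, i.e.\ that $v$ be an eigenvector of $H$ for the eigenvalue $(n-m)/m$. The entire content of the converse is to extract this last fact from \eqref{2} alone, since $G$ is manifestly self-adjoint with unit diagonal and off-diagonal entries of modulus $\alpha_{n,m}$.

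This eigenvector statement is the main obstacle. As $vv^\ast$ has rank one (note $v\neq 0$ because $\alpha_{n,m}>0$), equation \eqref{2} exhibits $nH-mH^2$ as a rank-one positive matrix with nonzero eigenvalue $mv^\ast v=n-m$ and eigenvector $v$. Being a polynomial in $H$, the matrix $nH-mH^2$ commutes with $H$, so its one-dimensional top eigenspace $\mathrm{span}(v)$ is $H$-invariant and $Hv=\lambda v$ for a scalar $\lambda$; substituting into \eqref{2} gives $n\lambda-m\lambda^2=n-m$, whose roots are $\lambda=1$ and $\lambda=(n-m)/m$. To eliminate $\lambda=1$ I would argue by multiplicities: the remaining eigenvalues of $H$ are roots of $n x-m x^2$, hence lie in $\{0,n/m\}$, so if $n/m$ occurs with multiplicity $b$ then $\mathrm{tr}\,H=\lambda+\tfrac nm b=n-1$; with $\lambda=1$ this forces $b=m(n-2)/n$, which is not an integer unless $n=2m$, and precisely when $n=2m$ the two roots coincide at $(n-m)/m$. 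Thus $\lambda=(n-m)/m$ in every case, the off-diagonal block vanishes, and assembling the three blocks yields \eqref{mg}; by the characterization $G$ is the Gram matrix of an ETF.
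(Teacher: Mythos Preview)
Your proof is correct and follows the same block-expansion strategy as the paper. The forward direction is identical. For the converse the paper is much terser: it multiplies \eqref{2} on the right by $v$ to obtain $nHv-mH^{2}v=(n-m)v$ and then simply asserts that this forces $mG^{2}=nG$. You correctly observe that the off-diagonal block of $mG^{2}-nG$ is $mHv-(n-m)v$, which is \emph{not} literally what the paper's multiplication yields; that computation only gives $(H-I)\bigl(mH-(n-m)I\bigr)v=0$, leaving open the possibility that $v$ lies in the $1$-eigenspace of $H$. Your commutation argument (the rank-one matrix $nH-mH^{2}$ is a polynomial in $H$, so its one-dimensional nonzero eigenspace $\mathrm{span}(v)$ is $H$-invariant) together with the trace/multiplicity count to exclude $\lambda=1$ when $n\neq 2m$ supplies exactly the missing justification. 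So your argument is the paper's approach made rigorous, and the integer-divisibility step $b=m(n-2)/n\in\mathbb{Z}\Rightarrow n\mid 2m\Rightarrow n=2m$ is a genuine addition rather than a stylistic difference.
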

\begin{proof}
If $G$ is the Gram matrix of an equiangular $(n,m)$-frame, then \eqref{2} follows from equation \eqref{mg}. Conversely, assume that \eqref{2} holds for some column vector $v$ having entries of modulus $\alpha_{n,m}$. Multiply this by $v$ to obtain $nHv-mH^2v=\alpha_{n,m}^2m(n-1)v=(n-m)v$. This implies that the matrix $G$ shown in \eqref{Grep} satisfies \eqref{mg} and therefore it is the Gram matrix of an equiangular $(n,m)$-frame.
\end{proof}
The point is that we have some rather obvious necessary conditions on $H$, and once they are fulfilled, we can reconstruct the missing column vector of the Gram matrix, i.e.\ once we have $n-1$ suitably chosen equiangular lines with common angle $\alpha_{n,m}$, the last vector $\varphi_n$ follows for free. Very little is known about the structure of smaller principal submatrices.
\begin{lemma}\label{projc}
Let $2\leq r \leq n-2$, $G$ be a Gram matrix of an equiangular $(n,m)$-frame and $H$ be its $(n-r)\times (n-r)$ leading principal submatrix. Then the following conditions are met$:$
\begin{gather}\mathrm{rank}\left(mH^2-nH\right)\leq r,\label{12}\\
\left|\frac{n}{m}h_{i,j}-\sum_{k=1}^{n-r}h_{i,k}\overline{h_{j,k}}\right|\leq r\alpha_{n,m}^2,\qquad\text{for all $1\leq i<j\leq n-r$}.\label{13}\end{gather}
\end{lemma}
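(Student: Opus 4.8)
The plan is to read both conditions directly off the frame equation \eqref{mg} after partitioning $G$ into blocks according to the leading submatrix $H$. First I would write
\[
G=\begin{bmatrix} H & B\\ B^\ast & C\end{bmatrix},
\]
where $B$ is the $(n-r)\times r$ upper-right block and $C$ the $r\times r$ trailing principal submatrix. The observation that drives everything is that every entry of $B$ is an off-diagonal entry of $G$, and so has modulus exactly $\alpha_{n,m}$.

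Next I would expand $G^2$ in blocks; its upper-left $(n-r)\times(n-r)$ corner equals $H^2+BB^\ast$, so comparing with the same corner of $mG^2=nG$ yields
\[
mH^2+mBB^\ast=nH,\qquad\text{equivalently}\qquad mH^2-nH=-mBB^\ast.
\]
From this single identity both claims follow. For \eqref{12}, a nonzero scalar does not change rank and $\mathrm{rank}(BB^\ast)=\mathrm{rank}(B)\le r$ since $B$ has only $r$ columns, whence $\mathrm{rank}(mH^2-nH)\le r$.

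For \eqref{13}, I would read the same identity entrywise. Using $H=H^\ast$ so that $h_{k,j}=\overline{h_{j,k}}$, the $(i,j)$ entry of $mBB^\ast=nH-mH^2$ rearranges into
\[
\frac{n}{m}h_{i,j}-\sum_{k=1}^{n-r}h_{i,k}\overline{h_{j,k}}=(BB^\ast)_{i,j}=\sum_{k=1}^{r}b_{i,k}\overline{b_{j,k}}.
\]
For $i\neq j$ the triangle inequality together with $|b_{i,k}|=|b_{j,k}|=\alpha_{n,m}$ bounds the modulus of the right-hand side by $r\alpha_{n,m}^2$, which is precisely \eqref{13}.

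There is no real obstacle here beyond spotting the correct object: the content lies entirely in recognizing that the Schur-type block computation identifies $nH-mH^2$ with $m$ times the Gram matrix $BB^\ast$ of the discarded columns, after which the rank bound and the equiangularity bound are both immediate. The hypothesis $r\le n-2$ only serves to ensure that $H$ has order at least $2$, so that off-diagonal pairs $i<j$ exist in \eqref{13}; the block decomposition itself remains valid for any $1\le r\le n-1$.
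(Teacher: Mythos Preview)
Your argument is correct and is exactly the intended one: the paper's two-line proof (``obvious from \eqref{mg}'' and ``follows from \eqref{12} after using that the off-diagonal entries have modulus $\alpha_{n,m}$'') is nothing but a terse summary of the block identity $mH^2-nH=-mBB^\ast$ that you spell out. Your version simply makes explicit the block decomposition and the entrywise triangle-inequality step that the paper leaves to the reader.
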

\begin{proof}
Condition \eqref{12} is obvious, and follows from \eqref{mg}. Condition \eqref{13} follows from \eqref{12} after using that the off-diagonal entries of the Gram matrix are of modulus $\alpha_{n,m}$.
\end{proof}

Condition \eqref{13} is a one-way analytic criterion which can detect inextensible sub-Gram matrices. For the case $r=2$ it is possible to find a stronger, algebraic criterion. We recall a simple, yet extremely useful lemma as follows.
\begin{lemma}[cf.\ \cite{Ha}, \cite{FSZ}]\label{HTIO}
Let $x_1, x_2$, $y_1, y_2$ and $z_1, z_2$ be complex numbers of modulus $1$. Then
\begin{equation}\label{haid}
\begin{split}
\left(x_1\overline{y_1}+x_2\overline{y_2}\right)\left(y_1\overline{z_1}+y_2\overline{z_2}\right)&\left(z_1\overline{x_1}+z_2\overline{x_2}\right)\\
&\equiv\left|x_1\overline{y_1}+x_2\overline{y_2}\right|^2+\left|y_1\overline{z_1}+y_2\overline{z_2}\right|^2+\left|z_1\overline{x_1}+z_2\overline{x_2}\right|^2-4.
\end{split}
\end{equation}
In particular, the left hand side is a real number.
\end{lemma}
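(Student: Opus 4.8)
The plan is to exploit a gauge freedom hidden in the left-hand side: each of the three factors is homogeneous of degree one in a suitable phase, so after pulling out unimodular prefactors the whole expression should collapse to a three-variable identity subject to a single multiplicative constraint.

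First I would factor each of the three sums. Writing $u = x_2\overline{x_1}\,y_1\overline{y_2}$, and using $|x_1|=|y_1|=1$, one has
$$x_1\overline{y_1}+x_2\overline{y_2}=x_1\overline{y_1}\bigl(1+u\bigr).$$
Setting $v = y_2\overline{y_1}\,z_1\overline{z_2}$ and $w = z_2\overline{z_1}\,x_1\overline{x_2}$ analogously, the other two factors become $y_1\overline{z_1}\bigl(1+v\bigr)$ and $z_1\overline{x_1}\bigl(1+w\bigr)$. Since all six numbers are unimodular, $u,v,w$ are unimodular as well, and the three prefactors multiply to $x_1\overline{y_1}\,y_1\overline{z_1}\,z_1\overline{x_1}=|x_1|^2|y_1|^2|z_1|^2=1$. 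Hence the left-hand side equals $(1+u)(1+v)(1+w)$. The decisive point is the relation $uvw=1$, which follows by direct cancellation of the moduli in the three defining products.

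Next I would expand $(1+u)(1+v)(1+w)=2+(u+v+w)+(uv+vw+wu)$, where I have already used $uvw=1$ to absorb the top monomial into the constant term. Because $uvw=1$ and the variables are unimodular, each quadratic monomial is the conjugate of a linear one, e.g.\ $uv=1/w=\overline{w}$, so that $uv+vw+wu=\overline{u}+\overline{v}+\overline{w}$. Therefore the left-hand side simplifies to $2+2\,\mathrm{Re}(u+v+w)$, which is manifestly real; this simultaneously proves the final assertion of the lemma.

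Finally I would evaluate the right-hand side in the same variables. Each squared modulus reduces to $|x_1\overline{y_1}|^2\,|1+u|^2=2+u+\overline{u}$, and likewise for the other two, so summing the three and subtracting $4$ gives exactly $2+2\,\mathrm{Re}(u+v+w)$, matching the left-hand side. The only genuinely delicate step is spotting the correct phase substitution together with the constraint $uvw=1$; everything downstream is a short symmetric-function manipulation requiring no case analysis.
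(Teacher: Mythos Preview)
Your argument is correct. The paper's own proof is a one-line remark that the identity ``follows immediately after observing that $|v|^2=v\overline{v}$ and in particular $\overline{v}\equiv 1/v$ for complex numbers $v$ of modulus $1$'', i.e.\ expand both sides and cancel; your substitution $u,v,w$ with $uvw=1$ is exactly this computation, organised so that the cancellation is visible rather than left to brute force.
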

\begin{proof}
Follows immediately after observing that $|v|^2=v\overline{v}$ and in particular $\overline{v}\equiv1/v$ for complex numbers $v$ of modulus $1$.
\end{proof}
Haagerup used a variant of Lemma~\ref{HTIO} to give a full classification of all complex Hadamard matrices of order $5$ \cite{Ha}. The (proof of the) next theorem reveals how.
\begin{theorem}[cf.\ \cite{FSZ}]\label{T}
Let $n\geq 5$ and assume that $\left[G\right]_{i,j}=g_{i,j}$, $1\leq i,j\leq n$ is the Gram matrix of a complex equiangular $(n,m)$-frame. Let $1\leq i<j<k\leq n$ be indices. Then, with the notations $\Sigma:=ng_{i,j}/m-\sum_{\ell=1}^{n-2}g_{i,\ell}\overline{g_{j,\ell}}$, $\Delta:=ng_{j,k}/m-\sum_{\ell=1}^{n-2}g_{j,\ell}\overline{g_{k,\ell}}$ and $\Psi:=ng_{k,i}/m-\sum_{\ell=1}^{n-2}g_{k,\ell}\overline{g_{i,\ell}}$, we have
\beql{sdp}\Sigma\Delta\Psi-\alpha_{n,m}^2\left(|\Sigma|^2+|\Delta|^2+|\Psi|^2-4\alpha_{n,m}^4\right)=0,\eeq
 where $\alpha_{n,m}$ is given by formula \eqref{al}.
\end{theorem}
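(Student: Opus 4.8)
The plan is to reduce the stated identity to Lemma~\ref{HTIO}, and the whole content lies in recognizing $\Sigma$, $\Delta$, $\Psi$ as two-term tails of the frame condition. First I would use \eqref{mg}: since $G$ is self-adjoint and $mG^2=nG$, the $(i,j)$ entry of $G^2$ gives $\sum_{\ell=1}^{n}g_{i,\ell}\overline{g_{j,\ell}}=\tfrac{n}{m}g_{i,j}$. Subtracting the partial sum $\sum_{\ell=1}^{n-2}g_{i,\ell}\overline{g_{j,\ell}}$ that appears in the definition of $\Sigma$ leaves only the contribution of the last two columns, so $\Sigma=g_{i,n-1}\overline{g_{j,n-1}}+g_{i,n}\overline{g_{j,n}}$, and likewise $\Delta=g_{j,n-1}\overline{g_{k,n-1}}+g_{j,n}\overline{g_{k,n}}$ and $\Psi=g_{k,n-1}\overline{g_{i,n-1}}+g_{k,n}\overline{g_{i,n}}$. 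The hypothesis $n\geq 5$ ensures $n-2\geq 3$, so three indices $i<j<k\leq n-2$ exist, and for such a triple all six entries $g_{i,n-1},g_{i,n},g_{j,n-1},g_{j,n},g_{k,n-1},g_{k,n}$ are genuine off-diagonal entries of modulus $\alpha_{n,m}$.

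Writing $\alpha:=\alpha_{n,m}$, the next step is to pass to unit-modulus quantities by normalizing. I would set $x_1=g_{i,n-1}/\alpha$, $x_2=g_{i,n}/\alpha$, $y_1=g_{j,n-1}/\alpha$, $y_2=g_{j,n}/\alpha$, $z_1=g_{k,n-1}/\alpha$, $z_2=g_{k,n}/\alpha$. Each of these has modulus $1$, and since $\alpha$ is real and positive the collapsed expressions from the first paragraph read $\Sigma=\alpha^2\bigl(x_1\overline{y_1}+x_2\overline{y_2}\bigr)$, $\Delta=\alpha^2\bigl(y_1\overline{z_1}+y_2\overline{z_2}\bigr)$, and $\Psi=\alpha^2\bigl(z_1\overline{x_1}+z_2\overline{x_2}\bigr)$, which is exactly the configuration of three bilinear sums to which Lemma~\ref{HTIO} applies.

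Applying \eqref{haid} to $x_1,x_2,y_1,y_2,z_1,z_2$, its left-hand side equals $(\Sigma/\alpha^2)(\Delta/\alpha^2)(\Psi/\alpha^2)=\Sigma\Delta\Psi/\alpha^6$, while the three squared moduli on the right-hand side equal $|\Sigma|^2/\alpha^4$, $|\Delta|^2/\alpha^4$, $|\Psi|^2/\alpha^4$. Thus the Haagerup identity becomes $\Sigma\Delta\Psi/\alpha^6=(|\Sigma|^2+|\Delta|^2+|\Psi|^2)/\alpha^4-4$; multiplying through by $\alpha^6$ and moving everything to one side gives $\Sigma\Delta\Psi-\alpha^2\bigl(|\Sigma|^2+|\Delta|^2+|\Psi|^2-4\alpha^4\bigr)=0$, which is precisely \eqref{sdp}.

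The substantive step is the first one: it is only through the frame condition \eqref{mg} (equivalently, $G$ being a rank-$m$ projection up to scaling) that the long defining sums telescope into the two-term tails that Lemma~\ref{HTIO} is built for, and this is the sole place where \eqref{mg} enters. Everything afterward is bookkeeping of powers of $\alpha_{n,m}$. The only point genuinely needing care is the index range: the normalization above is legitimate only when the two omitted columns $n-1,n$ are disjoint from $\{i,j,k\}$, since otherwise a diagonal factor $g_{n-1,n-1}=1$ or $g_{n,n}=1$ of modulus $1$ would intrude and spoil the rescaling. Hence the hypothesis should be read as $i<j<k\leq n-2$, which is non-vacuous exactly because $n\geq 5$.
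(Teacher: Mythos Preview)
Your argument is correct and follows essentially the same route as the paper: use the frame condition \eqref{mg} to express $\Sigma$, $\Delta$, $\Psi$ as two-term tails over columns $n-1$ and $n$, rescale by $\alpha_{n,m}^2$ to obtain sums of unimodular terms, invoke Lemma~\ref{HTIO}, and clear the powers of $\alpha_{n,m}$. Your explicit remark that the argument requires $i<j<k\leq n-2$ (so that all six entries involved are genuinely off-diagonal) is a useful clarification of the stated index range.
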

Observe that Theorem \ref{T} describes and algebraic identity with rational coefficients relating the entries of the principal $(n-2)\times (n-2)$ submatrix of $G$.
\begin{proof}
The following idea is essentially due to Haagerup \cite{Ha}. Let $G$ be the Gram matrix of a complex ETF, and consider its $i$th, $j$th and $k$th rows. Then, by using $G^2=n/mG$, we find that $\left\langle G_i,G_j\right\rangle=n/mg_{i,j}$. Using this for the pairs $(i,j)$, $(j,k)$, and $(k,i)$, we get:
\[ng_{x,y}/m-\sum_{\ell=1}^{n-2}g_{x,\ell}\overline{g_{y,\ell}}=g_{x,n-1}\overline{g_{y,n-1}}+g_{x,n}\overline{g_{y,n}}, \qquad \text{for $(x,y)\in\{(i,j), (j,k), (k,i)\}$}.\]
Divide all three equations above by $\alpha_{n,m}^2$ to obtain the sum of two unit vectors on their right hand side. In particular, we can apply Lemma~\ref{HTIO} for the quantities appearing on the left: plugging them into equation \eqref{haid} and scaling by $\alpha_{n,m}^6$ yields the desired result.
\end{proof}
It is unknown whether the conditions described by Theorem \ref{T} are sufficient for reconstructing the last two frame vectors.

Finally, we mention a general necessary condition for the existence of equiangular tight frames. Compared to the local conditions described above, it is a global condition depending on solely the parameters $n$ and $m$.
\begin{theorem}[Naimark, see e.g.\ \cite{NaimarkC}]\label{naimark}
There exists an equiangular complex $(n,m)$-frame if and only if there exists an equiangular complex $(n,n-m)$ frame.
\end{theorem}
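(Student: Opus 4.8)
The plan is to realize this as the classical Naimark complement at the level of Gram matrices: the defining features of an ETF Gram matrix---self-adjointness, unit diagonal, constant off-diagonal modulus, and the projection identity \eqref{mg}---are all preserved, after a suitable rescaling, upon passing to the orthogonal complement of the range. Concretely, suppose $G$ is the Gram matrix of an equiangular $(n,m)$-frame. By the frame condition \eqref{mg}, the matrix $P:=\frac{m}{n}G$ is self-adjoint and idempotent, i.e.\ an orthogonal projection of rank $m$ acting on $\mathbb{C}^n$. I would then set $Q:=I_n-P$, an orthogonal projection of rank $n-m$, and propose as the candidate complement Gram matrix
\[G':=\frac{n}{n-m}\,Q=\frac{n}{n-m}I_n-\frac{m}{n-m}G.\]

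Next I would verify that $G'$ has exactly the shape required of the Gram matrix of an equiangular $(n,n-m)$-frame. It is self-adjoint because $G$ is. Its diagonal entries are $\frac{n}{n-m}-\frac{m}{n-m}\cdot 1=1$, while its off-diagonal entries are $-\frac{m}{n-m}g_{i,j}$, of modulus $\frac{m}{n-m}\alpha_{n,m}$. A one-line computation using \eqref{al} gives $\left(\frac{m}{n-m}\alpha_{n,m}\right)^2=\frac{m^2}{(n-m)^2}\cdot\frac{n-m}{m(n-1)}=\frac{m}{(n-m)(n-1)}=\alpha_{n,n-m}^2$, so every off-diagonal entry has the correct modulus $\alpha_{n,n-m}$. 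Finally, since $Q^2=Q$, we obtain $(n-m)(G')^2=(n-m)\frac{n^2}{(n-m)^2}Q=\frac{n^2}{n-m}Q=nG'$, which is precisely \eqref{mg} with $m$ replaced by $n-m$. As $G'=\frac{n}{n-m}Q$ is positive semidefinite of rank $n-m$, it factors as $G'=(F')^\ast F'$ for some $(n-m)\times n$ matrix $F'$, whose columns constitute the desired equiangular $(n,n-m)$-frame.

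For the converse I would observe that the construction is an exact involution under the substitution $m\leftrightarrow n-m$: applying the same recipe to $G'$ yields $P'=\frac{n-m}{n}G'=Q$, hence $I_n-P'=P$ and $\frac{n}{m}P=G$, recovering the original frame. Thus the two existence statements are equivalent. I do not anticipate a genuine obstacle; the construction is a clean linear-algebra argument and the only point demanding care is the bookkeeping of scaling factors---in particular confirming that $\frac{m}{n-m}\alpha_{n,m}=\alpha_{n,n-m}$ comes out exactly. This single identity is what forces the off-diagonal modulus to match and is precisely the reason the Naimark complement of an ETF is again an ETF.
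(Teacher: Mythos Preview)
Your proof is correct and follows exactly the same approach as the paper: both pass to the Naimark complement Gram matrix $G'=(nI-mG)/(n-m)$, and you simply spell out the verifications (unit diagonal, off-diagonal modulus $\alpha_{n,n-m}$, and the projection identity) that the paper's sketch leaves implicit.
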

\begin{proof}[Proof $($Sketch$)$]
If $G$ is the Gram matrix of a complex equiangular $(n,m)$-frame, then $(nI-mG)/(n-m)$ is the Gram matrix of a complex equiangular $(n,n-m)$ frame.
\end{proof}
Since it is known that the maximum number of equiangular lines is at most $m^2$ in $\mathbb{C}^m$, the theorem above has the following consequence.
\begin{corollary}\label{cor26}
Let $m\geq 3$. There does not exists any complex equiangular $(n,m)$-frame for any $m+2\leq n\leq \left\lceil\frac{1+2m+\sqrt{1+4m}}{2}\right\rceil-1$.
\end{corollary}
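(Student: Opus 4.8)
The plan is to derive the stated range from two facts supplied in the excerpt: Naimark duality (Theorem~\ref{naimark}) and the quoted bound that at most $m^2$ equiangular lines exist in $\mathbb{C}^m$. Write $U:=\lceil(1+2m+\sqrt{1+4m})/2\rceil-1$ for the upper endpoint. I would argue by contradiction, assuming a complex equiangular $(n,m)$-frame exists for some integer $n$ with $m+2\le n\le U$. Since $n\ge m+2$ we have $n-m\ge 2$, so Theorem~\ref{naimark} produces a complex equiangular $(n,n-m)$-frame. Its frame vectors are $n$ equiangular unit vectors in $\mathbb{C}^{n-m}$, so the maximum-equiangular-lines bound, applied now in dimension $n-m$, forces
\[
n\le (n-m)^2 .
\]

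The heart of the proof is to show that the hypothesized range forces the \emph{opposite} inequality $n>(n-m)^2$, which is the contradiction. I would rewrite $(n-m)^2<n$ as the quadratic
\[
n^2-(2m+1)n+m^2<0,
\]
whose roots are $\tfrac{1}{2}\bigl(2m+1\pm\sqrt{4m+1}\bigr)$, so the inequality holds exactly for
\[
\frac{2m+1-\sqrt{4m+1}}{2}<n<\frac{2m+1+\sqrt{4m+1}}{2}.
\]
The upper endpoint equals $(1+2m+\sqrt{1+4m})/2$, and $U=\lceil(1+2m+\sqrt{1+4m})/2\rceil-1$ is by construction the largest integer strictly below it; hence every integer $n\le U$ satisfies the right-hand inequality. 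For the left-hand inequality it suffices to note that $2(m+2)-(2m+1)=3>0>-\sqrt{4m+1}$, so $m+2>\tfrac{1}{2}(2m+1-\sqrt{4m+1})$ for all $m$. Thus the entire range $m+2\le n\le U$ lies strictly between the two roots, giving $n>(n-m)^2$ and contradicting $n\le(n-m)^2$.

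The main (essentially the only) obstacle is the bookkeeping around the ceiling. One must confirm that $\lceil x\rceil-1$ is the largest integer \emph{strictly} less than $x$ in both regimes — whether or not $1+4m$ is a perfect square, equivalently whether or not the larger root is itself an integer (this occurs precisely when $m=j(j+1)$, e.g.\ $m=6$, where $\sqrt{25}=5$ gives the root $9$). Getting this right is what guarantees that the genuinely attainable boundary case $n=(n-m)^2$ — where a $(n,n-m)$-frame can exist, as for SIC-POVMs — is correctly left \emph{out} of the forbidden interval, so that the corollary excludes exactly the integers $n$ with $n>(n-m)^2$ and no more. The hypothesis $m\ge 3$ plays no role in the logic beyond ensuring the range $m+2\le n\le U$ is nonempty, so the statement carries content.
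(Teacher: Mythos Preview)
Your argument is correct and is exactly the approach the paper intends: combine Naimark duality with the bound $n\le d^2$ for equiangular lines in $\mathbb{C}^d$, applied in dimension $d=n-m$, and then solve the quadratic $n^2-(2m+1)n+m^2<0$. Your handling of the ceiling is also right, since $\lceil r\rceil-1$ is the largest integer strictly below $r$ regardless of whether $r$ is itself an integer.
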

In the next section we describe an exact algebraic approach to the search for complex equiangular tight frames. This is in contrast with the widely used and successfully employed numerical methods \cite{JT}.
\section{Polynomial equations and Gr\"obner bases}\label{s3}
In this section we describe the sub-Gram matrices of complex equiangular tight frames as solution sets of a system of polynomial equations. Buchberger was the first, who described an algorithm which can decide if such a system has a common complex solution, by means of computing a so-called Gr\"obner basis \cite{BB}. We do not want to go into the technical details; the interested reader is referred to \cite{BX}, \cite{JJ}, \cite{La} and the references therein.

This algorithm, along with a more efficient variant of it ($F_4$), has been implemented in a number of computer algebra systems. We have used J.-C. Faug\`ere's FGb package\footnote{Version $1.58$ is available at the author's website: \url{http://www-calfor.lip6.fr/~jcf}.} linked to Maple $17$ to do the relevant Gr\"obner basis calculations \cite{JCF}, and double-checked the obtained results with Magma\footnote{Version $2.20$-$3$ for Linux supporting AVX. See \url{http://magma.maths.usyd.edu.au/magma/}.}. In order to reduce the complexity of the problem, we investigated the $(n-2)\times (n-2)$ principal sub-Gram matrices instead of the Gram matrices themselves as outlined in Section $2$. In what follows we enlist the relevant necessary conditions leading to a system of polynomial equations.
\begin{enumerate}[$($a$)$]
\item Choose any $m\geq 2$ and $n\geq m+1$ as desired. These are fixed integer numbers and not variables;
\item Set $H$ as an $(n-2)\times (n-2)$ matrix with entries $h_{i,i}=1$ for all $1\leq i\leq n-2$, $h_{1,i}=h_{i,1}=\alpha$ for all $2\leq i\leq n-2$, $h_{i,j}=\alpha x_{i,j}$ and $h_{j,i}=\alpha/x_{i,j}$ for all $2\leq i<j\leq n-2$. This gives us $\binom{n-3}{2}$ variables;
\item The angle $\alpha$ is determined by the initial parameters $n,m$. In particular, $m(n-1)\alpha^2-(n-m)=0$. If $\alpha_{n,m}$ is irrational, then we consider this equation as a condition on the matrix $H$, otherwise we set $\alpha:=\alpha_{n,m}$ from equation \eqref{al};
\item The rank condition: $\mathrm{rank}(H)\leq m$. In particular, all $(m+1)\times (m+1)$ minors vanish. This describes $\binom{n-2}{m+1}^2$ equations;
\item The frame condition (see Lemma \ref{projc}): $\mathrm{rank}\left(mH^2-nH\right)\leq 2$. This gives rise to another $\binom{n-2}{3}^2$ equations;
\item Haagerup's condition (Theorem \ref{T}): for every triplet of rows $1\leq i<j<k\leq n-2$ the equality \eqref{sdp} must hold. This gives another $\binom{n-2}{3}$ equations;
\item The complex conjugates of the polynomials described in $($f$)$;
\item The nonzero condition: $1-u\prod_{2\leq i<j\leq n-2}x_{i,j}=0$, where $u$ is a ``dummy'' variable.
\end{enumerate}
Conditions $($a$)$-$($h$)$ are those we actually implement. Our desired solution set is further subject to:
\begin{enumerate}[$($a$)$]
\setcounter{enumi}{8}
\item The unimodular condition: $|x_{i,j}|=1$ for all $2\leq i<j\leq n-2$;
\item The nonnegative condition: $\alpha>0$.
\end{enumerate}
In summary, the total number of polynomial equations and variables read
\[\#\text{eqs}=2+\binom{n-2}{m+1}^2+\binom{n-2}{3}^2+2\binom{n-2}{3},\qquad \#\text{vars}=2+\binom{n-3}{2},\]
where the number of variables and the number of equations are both one less in case $\alpha_{n,m}$ is rational. We suspect that some of these conditions are redundant.

We remark here that if a variable $x$ assumes complex unimodular values, then its conjugate is its reciprocal, that is, $\overline{x}=1/x$. Therefore complex conjugation leads to rational functions, whose denominators can be cleared via appropriate scaling by these variables. This implies that, apart from conditions $($i$)$ and $($j$)$, we are indeed dealing with a system of polynomial equations. It is difficult to verify condition $($i$)$ in general (although Cohn's theorem \cite{C} is a fundamental result of interest here. For more consult \cite{FSZ}).

From Corollary \ref{cor26} we know that an equiangular $(5,3)$-frame cannot exist. The following toy example should verify this.
\begin{example}\label{ex35}
Set $m:=3$, $n:=5$, and consider the partial Gram matrix
\[H=\alpha\left[\begin{array}{ccc}
1/\alpha & 1 & 1\\
1 & 1/\alpha & x_{2,3}\\
1 & 1/x_{2,3} & 1/\alpha
\end{array}\right].\]
We have $5$ equations in $3$ variables $\alpha, x_{2,3}$ and $u$, which read, after appropriate scaling, and reduction modulo $6\alpha^2-1$, as follows:
\[\left.\begin{array}{rcc}
6 \alpha^2-1&=&0\\
3 x_{2,3}^4+58 x_{2,3}^3 \alpha+18 x_{2,3}^2+58 x_{2,3} \alpha+3&=&0\\
-22 x_{2,3}^3 \alpha-9 x_{2,3}^2-36 x_{2,3} \alpha-3&=&0\\
-3   x_{2,3}^3-36 x_{2,3}^2 \alpha-9 x_{2,3}-22 \alpha&=&0\\
x_{2,3} u-1&=&0
\end{array}\right\}.\]
By eliminating $x_{2,3}^3$ it is easy to see that this system of equations does not have a solution.
\end{example}
\begin{rem}
For $m=3$ and $n=5$ the following matrix
\[H=\frac{1}{\sqrt6}\left[
\begin{array}{ccc}
 \sqrt{6} & 1 & 1 \\
 1 & \sqrt{6} & a \\
 1 & 1/a & \sqrt{6} \\
\end{array}
\right],\qquad\text{where}\qquad a=-\sqrt{6}/9+5\mathbf{i}\sqrt{3}/9,\]
satisfies all the conditions $($a$)$-$($e$)$ and $($h$)$-$($j$)$ as well as the analytic condition \eqref{13}. This demonstrates that Theorem \ref{T}, and in particular, conditions $($f$)$-$($g$)$ play an essential r\^ole in concluding nonexistence in Example~\ref{ex35}.
\end{rem}
\section{All complex equiangular tight frames in $\mathbb{C}^3$}
In this section we describe all complex equiangular tight frames in $\mathbb{C}^3$. The cases $n=3,4$ are somewhat trivial: in the first case $G$, by \eqref{al}, is the identity matrix. The Gram matrices of the $(4,3)$ frames are in fact real (up to normalization), and the frame vectors form a regular simplex (or tetrahedron). The case $(5,3)$ is impossible, as demonstrated above in Example \ref{ex35}, although one would obtain the same conclusion from the nonexistence of $(5,2)$ tight frames through Naimark's result. The next interesting case is the equiangular $(6,3)$-frames which are in one-to-one correspondence with self-adjoint complex conference matrices of order $6$, see \cite{Ka}, \cite{CG}, \cite{FSZframehad}.
\begin{proposition}[cf.\ \cite{Ka}]
All complex equiangular $(6,3)$-frames correspond to some member of the following one-parameter family of Gram matrices, up to equivalence$:$
\[G_6^{(1)}(a)=\frac{1}{\sqrt5}\left[
\begin{array}{cccc|cc}
 \sqrt{5} & 1 & 1 & 1 & 1 & 1 \\
 1 & \sqrt{5} & a & -a & -1 & 1 \\
 1 & \overline{a} & \sqrt{5} & 1 & -\overline{a} & -1 \\
 1 & -\overline{a} & 1 & \sqrt{5} & \overline{a} & -1 \\
\hline
 1 & -1 & -a & a & \sqrt{5} & 1 \\
 1 & 1 & -1 & -1 & 1 & \sqrt{5} \\
\end{array}
\right].\]
\end{proposition}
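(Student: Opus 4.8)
The plan is to specialize the Gr\"obner-basis machinery of Section~\ref{s3} to $m=3$, $n=6$. Here \eqref{al} gives $\alpha_{6,3}=1/\sqrt5$ and the frame condition \eqref{mg} becomes $G^2=2G$; equivalently, writing $G=I+\alpha C$ one sees that $C=\sqrt5\,(G-I)$ is a self-adjoint complex conference matrix of order $6$ (zero diagonal, unimodular off-diagonal entries, $C^2=5I$), which is the correspondence quoted before the statement. Following item~(b) I would pass to the $(n-2)\times(n-2)=4\times4$ leading principal submatrix $H$, normalized so that $h_{i,i}=1$ and $h_{1,i}=h_{i,1}=\alpha$, leaving only the three unimodular unknowns $x_{2,3},x_{2,4},x_{3,4}$ together with $\alpha$, which is constrained by $5\alpha^2-1=0$. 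Because $n-2=m+1=4$, the rank condition \eqref{12} of Lemma~\ref{projc} collapses to the single equation $\det H=0$, the frame condition contributes the vanishing of all $3\times3$ minors of $mH^2-nH$, and Theorem~\ref{T} contributes the identity \eqref{sdp} for each of the $\binom{4}{3}=4$ triples of rows, together with their complex conjugates.

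I would then clear denominators using $\overline{x}=1/x$, adjoin the nonzero condition of item~(h), and feed the resulting ideal to a Gr\"obner-basis engine, reducing modulo $5\alpha^2-1$. I expect the associated variety to be one-dimensional: the elimination should express two of the three variables as algebraic functions of the third, leaving a single free unimodular parameter. The only residual equivalence preserving this normalization is the simultaneous permutation action on the index set $\{2,3,4\}$ --- the diagonal rephasing freedom having already been spent in normalizing the first row and column --- and I would use it to select a canonical representative; this is what reduces a curve of solutions to the single essential parameter $a$ appearing in $G_6^{(1)}(a)$. The self-duality of the parameters under Naimark's Theorem~\ref{naimark}, namely $n-m=m=3$, furnishes the extra involution $G\mapsto 2I-G$ on the solution set, which is a convenient consistency check on the output.

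It remains to reconstruct the full $6\times6$ Gram matrix from $H$. Partitioning $G$ into a $4\times4$ block $H$, a $4\times2$ block $B$, and a trailing $2\times2$ block $D$, the identity $G^2=2G$ yields the three block equations $BB^\ast=2H-H^2$, $\;HB+BD=2B$, and $B^\ast B+D^2=2D$. Since $\mathrm{rank}(H)=3$, the matrix $2H-H^2$ is positive semidefinite of rank at most $2$, and one factors it as $BB^\ast$ (the factorization being determined up to a right $U(2)$ factor); the remaining two block equations, together with the requirements that every entry of $B$ and the off-diagonal entries of $D$ have modulus $\alpha$, then pin down an admissible extension. Finally, for the forward direction, one checks directly that $\bigl(G_6^{(1)}(a)\bigr)^2=2\,G_6^{(1)}(a)$ and that all off-diagonal entries have modulus $1/\sqrt5$ whenever $|a|=1$.

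I expect this reconstruction to be the main obstacle, and it is precisely the issue flagged after Theorem~\ref{T}: the conditions on the $(n-2)\times(n-2)$ submatrix are not known to guarantee recovery of the last two frame vectors in general. The genuine extra content is that $2H-H^2$ must admit a factorization $BB^\ast$ whose factor $B$ has all entries of modulus $\alpha$ and that a compatible trailing block $D$ exists. For the present case this is tractable because the Gr\"obner step reduces $H$ to an explicit one-parameter family for which the factorization can be exhibited by hand; the part of the write-up that demands the most care is the bookkeeping of the permutation equivalence in the second paragraph, ensuring that no spurious parameter survives and that no inequivalent solution is overlooked.
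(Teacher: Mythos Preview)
Your plan coincides with the paper's: run the Gr\"obner machinery of Section~\ref{s3} for $m=3$, $n=6$ on the $4\times4$ principal submatrix with variables $\alpha,x_{2,3},x_{2,4},x_{3,4},u$, obtaining a one-dimensional ideal. The only real divergence is in the reconstruction. You propose to factor $2H-H^2=BB^\ast$ and solve the block equations, and you rightly flag this as the delicate step in light of the remark after Theorem~\ref{T}. The paper avoids this entirely: from the Gr\"obner basis it reads off the element $(x_{2,4}^2-1)(x_{3,4}^2-1)(x_{2,4}+x_{3,4})\in\mathcal{I}$, and by the index symmetry on $\{2,3,4\}$ concludes that every normalized off-diagonal entry of the full $6\times6$ Gram matrix must be $\pm1$ or $\pm a$ for a single free unimodular $a$. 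With this finite structural information propagated to all of $G$, imposing \eqref{mg} directly is a routine (automated) check on sign patterns, and no block factorization is needed. So your approach is correct but works harder than necessary at the end; the paper's shortcut is to extract entry-level structure from the ideal and carry it to the whole matrix rather than rebuild the last two columns from $H$. (A minor slip: $\det H=0$ is item~(d), the rank bound $\mathrm{rank}(H)\le m$, not condition~\eqref{12} of Lemma~\ref{projc}; the latter is what you correctly list next as the vanishing of $3\times3$ minors of $mH^2-nH$.)
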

\begin{proof}
We use the theory described in Section~\ref{s3}: we have $27$ polynomials in $5$ variables $\alpha, x_{2,3}, x_{2,4}, x_{3,4}$, and $u$. We find, that the ideal $\mathcal{I}$, generated by these polynomials is one dimensional, and in particular $(-1 + x_{2,4}) (1 + x_{2,4}) (-1 + x_{3,4}) (1 + x_{3,4}) (x_{2,4} + x_{3,4})\in\mathcal{I}$. Therefore, by symmetry, it follows that the off-diagonal entries of the normalized Gram matrix are either $\pm1$ or some arbitrary entry, say $a$, and its negative. Via equation \eqref{mg} it is an easy automated calculation to verify that all complex equiangular $(6,3)$-frames belong, up to equivalence, to the family $G_6^{(1)}$ described above.
\end{proof}
Now we turn to the discussion of equiangular $(7,3)$-frames. Such frames can be constructed from skew-symmetric Hadamard matrices, see \cite{RENES}, \cite{FSZframehad}.
\begin{proposition}
There exists a unique complex equiangular $(7,3)$-frame, up to equivalence.
\end{proposition}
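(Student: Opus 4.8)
The plan is to apply the computer-aided framework of Section~\ref{s3} with the specific parameters $n=7$ and $m=3$. First I would record the angle relation from condition~(c): since $\alpha_{7,3}=\sqrt{2}/3$ is irrational, I treat $9\alpha^2-2=0$ as a defining relation on the matrix $H$ and perform all reductions modulo it, rather than substituting a numerical value. The object under study is the $5\times 5$ leading principal submatrix $H$, parametrized by the six unimodular unknowns $x_{2,3},x_{2,4},x_{2,5},x_{3,4},x_{3,5},x_{4,5}$ together with $\alpha$ and the dummy variable $u$. I would then assemble the full polynomial system from conditions~(a)--(h): the $\binom{5}{4}^2=25$ vanishing $4\times 4$ minors encoding $\mathrm{rank}(H)\le 3$; the $\binom{5}{3}^2=100$ vanishing $3\times 3$ minors of $3H^2-7H$ encoding the frame condition $\mathrm{rank}(3H^2-7H)\le 2$ of Lemma~\ref{projc}; the $\binom{5}{3}=10$ Haagerup identities \eqref{sdp} of Theorem~\ref{T}; their ten complex conjugates; and the nonvanishing condition. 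After clearing denominators via $\overline{x_{i,j}}=1/x_{i,j}$, this yields a polynomial ideal $\mathcal{I}$ in eight variables.

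The central step is to compute a Gr\"obner basis of $\mathcal{I}$ and determine its dimension. In contrast with the one-parameter family found for $(6,3)$-frames, here I expect $\mathcal{I}$ to be zero-dimensional, so that its variety consists of finitely many points. I would then read off from the elimination ideals the univariate polynomials constraining $\alpha$ and each $x_{i,j}$, thereby pinning down the finitely many candidate submatrices $H$ explicitly; as in the $(6,3)$ case, one may hope that a conveniently factored element of the basis exposes the structure of the off-diagonal entries directly.

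Once a valid $H$ is in hand, I would reconstruct the two missing frame vectors. Because $H$ satisfies the rank and frame conditions, the reconstruction lemma, via equation~\eqref{2}, recovers the remaining columns of the $7\times 7$ Gram matrix $G$, after which \eqref{mg} certifies that $G$ is genuinely the Gram matrix of a $(7,3)$-frame. Finally, to establish \emph{uniqueness up to equivalence}, I would check that all solutions produced by the Gr\"obner basis are carried onto one another by the admissible symmetries---permutation of the indices combined with conjugation by a diagonal unitary matrix---so that a single equivalence class survives, which I would then identify with the frame arising from the skew-symmetric Hadamard construction mentioned above.

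The two places I expect real friction are the following. First, the Gr\"obner basis is computed over $\mathbb{C}$ and does not by itself enforce the unimodularity $|x_{i,j}|=1$ of condition~(i); I must verify a posteriori that the finitely many complex solutions are genuinely unimodular, which is exactly where the reciprocal--conjugate identity and, if necessary, Cohn's theorem come into play. Second, the bookkeeping that collapses all surviving solutions into one equivalence class is delicate: rather than merely counting points of the variety, one must exhibit the explicit permutation and diagonal-unitary conjugations that realize the collapse. I regard this second step as the main obstacle to a clean statement of uniqueness.
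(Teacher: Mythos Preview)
Your proposal is correct and follows essentially the same approach as the paper: the Gr\"obner basis of the Section~\ref{s3} ideal is zero-dimensional, one extracts an explicit univariate polynomial pinning each $x_{i,j}$ to a finite set (in the paper, $\{a,\overline a,a^3,\overline a^3\}$ with $a=-\sqrt2/4+\mathbf i\sqrt{14}/4$, visibly unimodular), a computer search collapses the resulting solutions to a single $5\times5$ class, and a unique extension to a $7\times7$ Gram matrix follows. One minor caution: equation~\eqref{2} recovers \emph{one} missing column from an $(n-1)\times(n-1)$ submatrix, and the paper explicitly remarks after Theorem~\ref{T} that sufficiency of the Haagerup conditions for recovering \emph{two} columns from an $(n-2)\times(n-2)$ submatrix is not known in general---so the final extension step should be carried out by direct search among the finitely many candidate columns rather than by invoking~\eqref{2} alone.
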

\begin{proof}
We have $147$ polynomials in $8$ variables $\alpha, x_{2,3}, \hdots, x_{3,5}$, and $u$. We find, that the ideal $\mathcal{I}$, generated by these polynomials is zero dimensional, and in particular $\{16 + 6 x_{2,3}^2 + 5 x_{2,3}^4 + 6 x_{2,3}^6 + 16 x_{2,3}^8,-9 \alpha x_{2,3}^3-9 \alpha x_{2,3}+4 x_{2,3}^4+3 x_{2,3}^2+4\}\subset\mathcal{I}$. Since, by condition $($j$)$ of Section~\ref{s3}, $\alpha=\alpha_{7,3}=\sqrt{2}/3$, we find that $x_{i,j}\in\{a,\overline{a},a^3,\overline{a}^3\}$, with $a=-\sqrt2/4+\mathbf{i}\sqrt{14}/4$ for all $2\leq i<j\leq 5$. A simple computer search reveals that there are $120$ solutions, which are all equivalent to a single $5\times 5$ matrix. This can be uniquely extended (up to equivalence) to the desired Gram matrix.
\end{proof}
Extensive numerical searches indicated that there might be no complex equiangular $(8,3)$-frames \cite{JT} (see also \cite[p.~67]{Z}). We confirm this conjecture for the first time.
\begin{theorem}\label{ne83}
There do not exist complex equiangular $(8,3)$ and $(8,5)$-frames.
\end{theorem}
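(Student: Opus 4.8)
The plan is to reduce the two cases to a single Gröbner basis computation. By Naimark's theorem (Theorem~\ref{naimark}), a complex equiangular $(8,5)$-frame exists if and only if a complex equiangular $(8,3)$-frame exists, since $8-3=5$. Thus it suffices to rule out the existence of $(8,3)$-frames, and I would devote the entire argument to that case.

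For $n=8$ and $m=3$ I would instantiate the machinery of Section~\ref{s3} on the $(n-2)\times(n-2)=6\times 6$ leading principal submatrix $H$. This introduces the $\binom{5}{2}=10$ unimodular-ratio variables $x_{i,j}$ ($2\le i<j\le 6$), the angle variable $\alpha$ subject to the relation $21\alpha^2-5=0$ (note that $\alpha_{8,3}=\sqrt{5/21}$ is irrational, so by item~(c) this is kept as an equation rather than substituted), and the dummy variable $u$ of item~(h), for a total of $12$ variables. The defining ideal $\mathcal{I}$ is generated by the $667$ polynomials enumerated in Section~\ref{s3}: the $\binom{6}{4}^2=225$ vanishing $4\times 4$ minors enforcing $\mathrm{rank}(H)\le 3$ (item~(d)), the $\binom{6}{3}^2=400$ vanishing $3\times 3$ minors of $3H^2-8H$ enforcing the frame condition $\mathrm{rank}(3H^2-8H)\le 2$ (item~(e), cf.\ Lemma~\ref{projc}), the $2\binom{6}{3}=40$ instances of Haagerup's identity \eqref{sdp} and their conjugates (items~(f)--(g), cf.\ Theorem~\ref{T}), together with the angle relation and the nonvanishing relation.

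Having assembled $\mathcal{I}$, I would compute a Gröbner basis over $\mathbb{Q}$ with FGb and independently verify it with Magma, aiming to certify that $1\in\mathcal{I}$. If the reduced basis is $\{1\}$, then the relaxed complex system has no common solution at all, and \emph{a fortiori} no solution meeting the unimodular and positivity requirements of items~(i)--(j); this immediately yields the nonexistence of $(8,3)$-frames, exactly in the spirit of Example~\ref{ex35}. Should the complex variety instead turn out to be nonempty, the fallback is to confirm that it is zero-dimensional and then to check, solution by solution, that every point violates either the unimodular condition $|x_{i,j}|=1$ or the positivity $\alpha>0$, since these are the only two conditions that are not already encoded as polynomial relations.

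The main obstacle is computational rather than conceptual: a Gröbner basis of $667$ polynomials in $12$ variables is expensive, and a naive computation may not terminate within feasible time or memory. To make it tractable I would exploit the large symmetry group acting on the $x_{i,j}$ that is induced by simultaneous permutations of the frame vectors together with the residual phase freedom described in Section~2, fixing representatives so as to reduce the effective number of variables. I would also compute first modulo a well-chosen prime $p$ in order to predict the shape of the basis cheaply, and only then lift to a certified computation over $\mathbb{Q}$. Obtaining agreement between the two independent systems that $1\in\mathcal{I}$ is precisely what turns the numerical evidence of \cite{JT} into a mathematically rigorous nonexistence proof.
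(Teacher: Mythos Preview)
Your proposal is correct and follows essentially the same route as the paper: reduce $(8,5)$ to $(8,3)$ via Naimark, instantiate the $667$-polynomial, $12$-variable system of Section~\ref{s3} for the $6\times 6$ sub-Gram matrix, and compute a Gr\"obner basis to find $\mathcal{I}=\langle 1\rangle$. The paper carries out precisely this direct computation (reporting roughly an hour and $24$\,GB in FGb, about $16$ hours in Magma) without invoking your suggested symmetry or modular-lifting shortcuts, and your fallback analysis for a nonempty variety turns out to be unnecessary.
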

\begin{proof}
We have $667$ polynomials in $12$ variables $\alpha, x_{2,3}, \hdots, x_{5,6}$, and $u$. By calculating a Gr\"obner basis, we find that $\mathcal{I}=\left\langle 1\right\rangle$, where $\mathcal{I}$ is the ideal, generated by these polynomials. Therefore there exists no complex solutions. The case $(8,5)$ follows from Theorem~\ref{naimark}.
\end{proof}
Since computing a Gr\"obner basis for the previous proof took about an hour and required about $24$ GBs of memory\footnote{The computation took about 16 hours in Magma with the argument {\ttfamily Homogenize} \normalfont set to {\ttfamily false}\normalfont.}, it seems unlikely that there is a ``trivial'' reason for nonexistence.
\begin{rem}
We remark that there exist a configuration of $6$ complex equiangular lines with common angle $\alpha=\alpha_{8,3}=\sqrt{5/21}$. This configuration, however, neither satisfies condition \eqref{12} nor the necessary conditions described by Theorem \ref{T}, and hence, cannot be extended to an equiangular $(8,3)$-frame. For the reader's amusement we briefly describe such a matrix as follows. With the notations introduced in Section 3, let $x_{2,3},\hdots,x_{4,6}$ be some (carefully chosen) roots of $625 + 1020 v^2 + 806 v^4 + 1020 v^6 + 625 v^8$, while $x_{5,6}$ be some (once again, carefully chosen) root of $15625 - 39780 v^2 + 52406 v^4 - 39780 v^6 + 15625 v^8$. Then, the self-adjoint $6\times 6$ matrix $H$ is of rank $3$, and all of its off-diagonal entries have modulus $\sqrt{5/21}$.
\end{rem}
Now we turn to the case of equiangular $(9,3)$-frames. By equation \eqref{mg}, we have $GG^\ast=3G$, thus the matrix $H:=3I-2G$ has unimodular entries and satisfies the orthogonality conditions: $HH^\ast=9I$. Hence $H$ is a self-adjoint complex Hadamard matrix with constant diagonal entries $1$. Conversely, any such complex Hadamard matrix leads to an equiangular tight frame \cite{CG}, \cite{FSZframehad}. It is more convenient to work with complex Hadamard matrices, because the orthogonality conditions are more transparent than the frame condition \eqref{mg}. Let $\omega:=-1/2+\mathbf{i}\sqrt{3}/2$ be the principal cubic root of unity once and for all.
\begin{example}[see \cite{FSZframehad}, \mbox{\cite[p.~61]{Z}}]\label{45}
The following is a one-parameter family of self-adjoint complex Hadamard matrices of order $9$ with constant diagonal $1$$:$
\[H_9^{(1)}(a)=\left[
\begin{array}{ccc|ccc|ccc}
 1 & 1 & 1 & 1 & 1 & 1 & 1 & 1 & 1 \\
 1 & 1 & 1 & \omega & \omega & \omega & \omega^2 & \omega^2 & \omega^2 \\
 1 & 1 & 1 & \omega^2 & \omega^2 & \omega^2 & \omega & \omega & \omega \\
\hline
 1 & \omega^2 & \omega & 1 & \omega^2 & \omega & a & a \omega^2 & a \omega \\
 1 & \omega^2 & \omega & \omega & 1 & \omega^2 & a \omega^2 & a \omega & a \\
 1 & \omega^2 & \omega & \omega^2 & \omega & 1 & a \omega & a & a \omega^2 \\
\hline
 1 & \omega & \omega^2 & \overline{a} & \overline{a}\omega & \overline{a}\omega^2 & 1 & \omega & \omega^2 \\
 1 & \omega & \omega^2 & \overline{a}\omega & \overline{a}\omega^2 & \overline{a} & \omega^2 & 1 & \omega \\
 1 & \omega & \omega^2 & \overline{a}\omega^2 & \overline{a} & \overline{a}\omega & \omega & \omega^2 & 1 \\
\end{array}
\right],\qquad |a|=1.\]
\end{example}
\begin{theorem}
All self-adjoint complex Hadamard matrices of order $9$ with constant diagonal $1$ belong to the one-parameter family $H_9^{(1)}(a)$, described above in Example~\ref{45}.
\end{theorem}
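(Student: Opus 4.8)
The plan is to follow exactly the computer-aided pipeline of Section~\ref{s3}, specialized to the parameters governing a self-adjoint complex Hadamard matrix of order $9$ with constant diagonal $1$; by the discussion preceding Example~\ref{45}, such matrices are in one-to-one correspondence with the Gram matrices $G$ of equiangular $(9,3)$-frames via $H=3I-2G$, so it suffices to classify the latter. First I would set $m:=3$, $n:=9$, so that $\alpha=\alpha_{9,3}=1/2$ is rational and can be substituted numerically, eliminating $\alpha$ and the dummy variable $u$ as a variable (keeping only the nonzero condition~$($h$)$). This leaves an $(n-2)\times(n-2)=7\times 7$ principal sub-Gram matrix $H$ parametrized by the $\binom{n-3}{2}=\binom{6}{2}=15$ unimodular variables $x_{i,j}$ ($2\le i<j\le 7$), subject to the rank condition~$($d$)$, the frame condition~$($e$)$, and Haagerup's condition~$($f$)$ together with its conjugates~$($g$)$, after clearing denominators by the $x_{i,j}$ as described in the text.

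The central step is to compute a Gr\"obner basis of the ideal $\mathcal{I}$ generated by this system in the polynomial ring over $\mathbb{Q}(\omega)$ (or over $\mathbb{Q}$ after adjoining the minimal polynomial $\omega^2+\omega+1=0$ of the primitive cube root of unity, since cubic roots of unity must appear in the entries). The goal is to show that $\mathcal{I}$ is one-dimensional and to read off from the elimination ideals a structural description of the admissible $x_{i,j}$. I would expect the basis to force each off-diagonal entry into a finite set of cube-root-of-unity multiples of a single free unimodular parameter $a$, exactly paralleling the $(6,3)$ and $(7,3)$ cases handled in the preceding propositions, where univariate factors such as those exhibited there pin down the entries up to the diagonal-scaling and permutation equivalence of Section~2. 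Once the Gr\"obner computation isolates the one free parameter, I would verify, by a direct (automated) substitution into the orthogonality relation $HH^\ast=9I$, that every resulting matrix is equivalent to a member of the family $H_9^{(1)}(a)$ of Example~\ref{45}, and conversely that each $H_9^{(1)}(a)$ satisfies all the conditions; the equivalence reduction is carried out by normalizing the first row and column to $1$ and permuting blocks.

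The main obstacle is entirely computational rather than conceptual: as the $(8,3)$ case already required roughly an hour and $24$~GB of memory, the Gr\"obner basis for a $7\times 7$ matrix with $15$ unimodular variables is liable to be even more expensive, and the genuine difficulty will be taming this complexity so that the computation terminates. I would attack this by exploiting the equivalence group to fix as many entries as possible before computing (normalizing the first row/column and using diagonal phases to set several $x_{2,j}$ to prescribed values), by working over the smallest possible coefficient field, and by choosing a monomial order (degree-reverse-lexicographic, then a change of order via FGLM if an elimination order is needed) that keeps intermediate coefficient growth manageable. A secondary subtlety is ensuring that the computed variety indeed corresponds to \emph{unimodular} solutions—condition~$($i$)$, which the algebraic system does not enforce directly—so the final verification that the surviving solutions all have $|x_{i,j}|=1$ and coincide with $H_9^{(1)}(a)$ must be done explicitly, as in the $(6,3)$ and $(7,3)$ proofs. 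Assuming the basis computation succeeds and yields the expected one-dimensional ideal, the classification follows immediately by matching against Example~\ref{45}.
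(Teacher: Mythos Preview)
Your proposal is conceptually sound and follows the natural extension of the pipeline in Section~\ref{s3}, but the paper deliberately departs from that approach for this theorem, stating explicitly that ``the proof is different from the approach of Section~\ref{s3}, as we explore $6\times 6$ submatrices.'' Rather than working with the $7\times 7$ sub-Gram matrix and the full battery of rank, frame, and Haagerup conditions in $16$ variables, the paper works directly with the Hadamard matrix $H$: from $\mathrm{Tr}(H)=9$ and orthogonality one gets that $H-3I$ has rank $3$, so all $4\times 4$ minors of its leading $6\times 6$ principal submatrix vanish. This yields a system in only $11$ variables (the ten off-diagonal unknowns $x_{2,3},\dots,x_{5,6}$ and the dummy $u$). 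The Gr\"obner computation is then used not to parametrize the whole variety, but to prove a single structural lemma: the minor system, augmented with the hypothesis that $x_{2,3}$ and $x_{2,4}$ are \emph{not} cubic roots of unity and are \emph{not} related by $x_{2,3}^2+x_{2,3}x_{2,4}+x_{2,4}^2=0$, is inconsistent. From this the paper deduces that every row of $H$ contains either only cubic roots of unity or a triple $a,\,a\omega,\,a\omega^2$, and then finishes by a short hand case-analysis, building $H$ row by row and matching it to $H_9^{(1)}(a)$.

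The trade-off is clear. Your direct approach, if it terminates, would deliver the one-parameter family in a single stroke with no case analysis; but with five more variables than the $(8,3)$ computation that already consumed about an hour and $24$~GB, termination is genuinely in doubt---and even the paper's smaller $6\times 6$ computation required roughly $5$ hours and $30$~GB. The paper's route buys tractability by shrinking the algebraic system and offloading the remaining work to elementary combinatorics on the row structure of $H$. There is no conceptual gap in your plan, only the practical risk that the Gr\"obner basis for the $7\times 7$ system is out of reach; if you pursue it, the symmetry reductions you mention will be essential rather than optional.
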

The proof is different from the approach of Section~\ref{s3}, as we explore $6\times 6$ submatrices.
\begin{proof}
Let $H$ be a self-adjoint complex Hadamard matrix with constant diagonal $1$. We may assume that the first row and column of $H$ is normalized to $1$. Since $H$ is an orthogonal matrix, from $\mathrm{Tr}(H)=9$ we infer that the spectrum of it is $\{[-3]^3,[3]^6\}$, and hence $H-3I$ has rank $3$. In particular, all $4\times 4$ minors of $H-3I$ must vanish. Now let us consider $K$, the leading $6\times 6$ principal submatrix of $H-3I$. We claim that $K$ must have the same structure as the principal submatrices of $H_9^{(1)}$ displayed in Example~\ref{45}: namely, if an off-diagonal entry $x_{2,3}$ is not a cubic root of unity, then there must be another off diagonal entry, in the same row of $K$, say $x_{2,4}$. Moreover, $x_{2,4}=\omega x_{2,3}$ or $x_{2,4}=\omega^2 x_{2,3}$, and hence $x_{2,3}^2+x_{2,3}x_{2,4}+x_{2,4}^2=0$. To see this, let $M_1$, $M_2$, $\hdots$, $M_{225}$ be the scaled $4\times 4$ minors of $K$, and consider the following system of polynomial equations in $11$ variables $u$, $x_{2,3},\hdots,x_{5,6}$:
\[\left.\begin{array}{rcc}
\mathrm{det}(M_1)&=&0\\
\mathrm{det}(M_2)&=&0\\
&\vdots&\\
\mathrm{det}(M_{225})&=&0\\
u(x_{2,3}^3-1)(x_{2,4}^3-1)(x_{2,3}^2+x_{2,3}x_{2,4}+x_{2,4}^2)\prod_{2\leq i<j\leq 6} x_{i,j}-1&=&0
\end{array}\right\},\]
containing the assumption that neither $x_{2,3}$ nor $x_{2,4}$ are a cubic root of unity, and they are not related by $x_{2,3}^2+x_{2,3}x_{2,4}+x_{2,4}^2$. By computing a Gr\"obner basis in about $5$ hours, utilizing nearly $30$ GBs of memory\footnote{The same computation took about 34 hours in Magma, using significantly less memory.}, we readily see that the system of equations above has no solutions. Therefore, the entries of the complex Hadamard matrix $H$ are either cubic roots of unity, or if not, then these non cubic entries must be (by orthogonality) $a$, $a\omega$ and $a\omega^2$ for some complex unimodular number $a$, all three of them appearing exactly once in some, say the second, row of $H$.

Now assume that $H$ contains a non cubic entry $a$. We may assume, up to equivalence, that the entries in the first row of $H$ are all $1$, while its second row is $H_2:=[1 , 1 , a , a\omega , a\omega^2 , \omega , \omega , \omega^2 , \omega^2]$. Since $H$ is self-adjoint, we have $H_{3,1}=H_{3,3}=1$, $H_{3,2}=\overline{a}$, and hence the third row must be some permutation of the second, by replacing $a\rightarrow \overline{a}$ in it. By examining all possible permutations (by computers), we readily see that if $a\neq -1$, then there are four possibilities for the third row, all of them leading to $[1,\overline{a},1,\omega^2,\omega,\overline{a}\omega^2,\omega^2,\overline{a}\omega,\omega]$ after permuting the last four columns, if necessary. It follows that the first five rows of $H$ are equivalent to
\[H_5(a):=\left[\begin{array}{ccccccccc}
1 & 1 & 1 & 1 & 1 & 1 & 1 & 1 & 1\\
1 & 1 & a & a\omega & a\omega^2 & \omega & \omega & \omega^2 & \omega^2\\
1 & \overline{a} & 1 & \omega^2 & \omega & \overline{a}\omega^2 & \omega^2 & \overline{a}\omega & \omega\\
1 & \overline{a}\omega^2 & \omega & 1 & \omega^2 & \overline{a}\omega & \omega^2 & \overline{a} & \omega\\
1 & \overline{a}\omega & \omega^2 & \omega & 1 & \overline{a} & \omega^2 & \overline{a}\omega^2 & \omega\\
\end{array}\right].\]
Since $H$ is self-adjoint, the first $5$ columns are already known, and the unknown entries in the sixth and eighth rows and columns must be some cubic root of unity. Hence all remaining entries are some cubic root of unity, and we arrived to the solution displayed in Example~\ref{45}.

If $a=-1$, then there are $5$ ways to extend the second row with a third, up to permuting the last four columns. However, only one of these can be extended (in a unique way) by further orthogonal rows, leading to the matrix $H_9^{(1)}(-1)$ of Example~\ref{45}.

Finally, if $H$ is composed of cubic roots of unity, then it is easy to see that the first three rows must be the same as in the matrix $H_9^{(1)}(1)$. This $3\times 9$ matrix can be extended by an orthogonal row in $12$ distinct ways, but all of these are permutation equivalent to the first four rows of $H_9^{(1)}(1)$. Now it is elementary to fill out the missing entries and get $H_9^{(1)}(1)$.
\end{proof}
\begin{corollary}
If $G$ is the Gram matrix of an equiangular $(9,3)$-frame, then $G$ belongs to the family $G_9^{(1)}(a):=(3I_9-H_9^{(1)}(a))/2$, where the matrix $H_9^{(1)}(a)$ is given in Example~\ref{45}.
\end{corollary}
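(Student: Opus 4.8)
The plan is to transport the classification of the theorem just proved across the affine correspondence between Gram matrices of $(9,3)$-frames and self-adjoint complex Hadamard matrices of order $9$ that was established at the opening of this section; the corollary is then essentially a translation, up to equivalence. First I would recall that if $G$ is the Gram matrix of an equiangular $(9,3)$-frame, then the frame condition \eqref{mg} reads $3G^2=9G$, so $GG^\ast=G^2=3G$ (using that $G$ is self-adjoint). Setting $H:=3I_9-2G$, a direct computation gives $HH^\ast=9I_9-12G+4GG^\ast=9I_9$, while the diagonal entries are $3-2=1$ and the off-diagonal entries have modulus $2\alpha_{9,3}=1$. Thus $H$ is a self-adjoint complex Hadamard matrix of order $9$ with constant diagonal $1$, and by the preceding theorem it must coincide, up to equivalence, with some $H_9^{(1)}(a)$, $|a|=1$.

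Next I would check that this correspondence respects the relevant equivalence relations, so that the ``up to equivalence'' clause transfers correctly. The map in question is the affine involution $X\mapsto(3I_9-X)/2$, inverse to $G\mapsto 3I_9-2G$. For any permutation matrix $P$ and unitary diagonal matrix $D$ one has $PD(3I_9-2G)D^\ast P^T=3I_9-2\,PDGD^\ast P^T$, since $PDD^\ast P^T=I_9$; hence conjugation of $G$ by $PD$ corresponds exactly to conjugation of $H$ by the same $PD$. Such conjugation preserves self-adjointness, unimodularity of the entries, and the constant diagonal $1$ (because $|d_i|^2=1$). Consequently the equivalence class of the Gram matrix $G$ maps bijectively onto the equivalence class of the Hadamard matrix $H$.

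Finally I would invert the correspondence: from $H$ equivalent to $H_9^{(1)}(a)$ it follows that $G=(3I_9-H)/2$ is equivalent to $(3I_9-H_9^{(1)}(a))/2=G_9^{(1)}(a)$, which is precisely the assertion. I do not expect a genuine obstacle here, since all the real content resides in the preceding theorem; the only point deserving a moment's care is the compatibility of the two equivalence relations under the affine map, and that is the routine verification indicated above.
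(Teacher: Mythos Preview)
Your proposal is correct and follows exactly the approach the paper intends: the corollary is stated without proof precisely because the bijection $G\leftrightarrow H=3I_9-2G$ between $(9,3)$-Gram matrices and self-adjoint complex Hadamard matrices of order $9$ with constant diagonal was already set up before Example~\ref{45}, and you have simply written out that translation (together with the routine check that equivalence is preserved) in full.
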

It is worthwhile to note that a self-adjoint complex Hadamard matrix of order $9$ does not necessarily have constant diagonal. Indeed, by replacing in the matrix $H_9^{(1)}(a)$ its lower right $3\times 3$ submatrix with its negative, we get a self-adjoint complex Hadamard matrix with non constant diagonal \cite{FSZframehad}. These matrices do not correspond to equiangular tight frames.

We believe that the combination of our ideas with a deeper understanding of the polynomial ideals, generated by minors of matrices \cite{AC}, and with more powerful computers, will lead to further classification results of this kind on equiangular tight frames in $\mathbb{C}^d$ for $d\geq 4$.
\section*{Acknowledgement}
Work on this paper began during the AIM workshop ``Frame theory intersects geometry'' in Palo Alto. We thank the organizers for their kind invitation. We are also grateful to Markus Grassl for some helpful comments regarding questions raised during the preparation of this manuscript. This work was supported by the Hungarian National Research Fund OTKA $K$-$77748$ and by the JSPS KAKENHI Grant Number $24\cdot02807$.

\end{document}